\newcommand{\dx}{\dot{x}}
\newcommand{\class}{\mathcal{C}}
\newcommand{\Kinf}{\mathcal{K}^{\infty}}
\renewcommand{\L}{\mathscr{L}}
\newcommand{\Mi}{\mathscr{M}}
\renewcommand{\P}{\mathscr{P}}
\newcommand{\V}{\mathbf{V}}
\newcommand{\C}{\mathscr{C}}
\newcommand{\D}{\mathscr{D}}
\newcommand{\B}{\mathbf{B}}
\newcommand{\co}{\mathtt{co}\ }
\newcommand{\A}{\mathbf{A}}
\newcommand{\startmodif}{\color{black}}
\newcommand{\stopmodif}{\color{black}}
\newcommand{\on}{\color{black}}
\newcommand{\off}{\color{black}}
\newtheorem{nntheorem}{\bf Theorem}
\newtheorem{nnassumption}{\bf Assumption}
\newtheorem{nndefinition}{\bf Definition}
\newtheorem{nnlemma}[nndefinition]{\bf Lemma}
\newtheorem{nncorollary}[nndefinition]{\bf Corollary}
\newtheorem{nnproposition}[nndefinition]{\bf Proposition}
\newtheorem{nnexample}[nndefinition]{Example}
\newtheorem{nnproblem}[nndefinition]{Problem}
\newenvironment{theorem}
{\begin{nntheorem}\it}
{\end{nntheorem}}
\newenvironment{proposition}
{\begin{nnproposition}\it}
{\end{nnproposition}}
\newenvironment{lemma}
{\begin{nnlemma}\it}
{\end{nnlemma}}
\newenvironment{assumption}
{\begin{nnassumption}\it}
{\end{nnassumption}}
\newtheorem{nnremark}{Remark}
\newenvironment{remark}{\begin{nnremark}\it}{\end{nnremark}}
\newenvironment{proof}{{\fontfamily{phv}\selectfont Proof.\ }}{\hfill \hspace*{1pt}\hfill $\bullet$}
\title{Relaxed and hybridized backstepping}
\author{Humberto Stein Shiromoto$^{\ddag}$, Vincent Andrieu$^{\ast}$, Christophe Prieur$^\ddag$
\thanks{The authors $\ddag$ are with GIPSA-lab, 38402, Grenoble, France. Work partially supported by HYCON2 Network of Excellence Highly-Complex and Networked Control Systems, grant agreement 257462. The author $\ast$ is with LAGEP, 69622, Lyon, France. E-mail:
{\tt\small  humberto.shiromoto@ieee.org}}
}
\begin{document}

\maketitle

\begin{abstract}
In the present work, we consider nonlinear control systems for which there exist structural obstacles to the design of classical continuous backstepping feedback laws. We conceive feedback laws such that the origin of the closed-loop system is not globally asymptotically stable but a suitable attractor (strictly containing the origin) is practically asymptotically stable. A design method is suggested to build a hybrid feedback law combining a backstepping controller with a locally stabilizing controller. A constructive approach is also suggested employing a differential inclusion representation of the nonlinear dynamics. The results are illustrated for a nonlinear system which, due to its structure, does not have {\it a priori} any globally stabilizing backstepping controller.
\end{abstract}

\section{Introduction}
Over the years, research in control of nonlinear dynamical systems has led to many different tools for the design of (globally) asymptotically stabilizing feedbacks. These techniques require particular structures on the systems. Depending on the assumptions, the designer may use different approaches such as high-gain \cite{GrognardSepulchreBastin1999}, backstepping \cite{FreemanKokotovic2008} or forwarding \cite{MazencPraly96}. However, in the presence of unstructured dynamics, these classical design methods may fail.

For systems \on where \off the classical backstepping technique can not be applied to render the origin globally asymptotically stable, the approach presented in this work may solve the problem by utilizing a combination of a backstepping feedback law \on that renders a suitable compact set globally attractive \off and a locally stabilizing controller. \startmodif\ By assumption, this set\stopmodif\ is contained in the basin of attraction of the system in closed loop with the local controller. The main contribution of this work can be seen as a design of hybrid feedback laws for systems which {\it a priori} do not have a \startmodif controller that globally stabilizes the origin.\stopmodif\ This methodology of hybrid stabilizers is by now \startmodif well known\stopmodif\ \cite{Prieur2001} and it has been also applied for systems that do not satisfy the Brockett's condition (\cite{GoebelPrieurTeel2009} and \cite{HespanhaLiberzonMorse03}). Hybrid feedback laws \on can \off have the advantage of rendering the equilibrium of the closed-loop system robustly asymptotically stable with respect to measurement noise and actuators' errors (\cite{4380508} and \cite{GoebelTeel2006}). We also present a technique to design a continuous local controller satisfying constraints on the basin of attraction of the closed-loop system by using a differential inclusion.

\startmodif To our best knowledge,\stopmodif\ this is the first work suggesting a design method to adapt the backstepping technique to a given local controller in the context of hybrid feedback laws. Other works do exist in the context of continuous controllers (\cite{PrieurAndrieu2010} and \cite{PanEzalKrenerKokotovic01}). In contrast to these works, for the class of systems considered in this paper, \emph{a priori} no continuous globally stabilizing controller does exist. Note that we address a different problem than \cite{MayhewSanfeliceTeel2011CDC}, \startmodif where\stopmodif\ a synergistic Lyapunov function and a feedback law are designed \startmodif by\stopmodif\ backstepping.

In Section \ref{sec:problem statement}, the stabilization problem under consideration is introduced, as well as the concepts on hybrid feedbacks and the set of assumptions. Based on this set of assumptions, a solution is given in Section \ref{sec:results}. In Section \ref{sec:checking the assumptions}, sufficient conditions are developped to verify  \startmodif the\stopmodif\ assumptions by using linear matrix inequalities. An illustration is given in Section \ref{simu}. Proofs of the results are presented in Section \ref{sec:proofs}. Concluding remarks are given in Section \ref{sec:conclusion}.

{\small {\bf Notation}. The boundary of a set $S\subset\R^n$ is denoted by $\partial S$, its \startmodif convex hull\stopmodif\ is denoted by $\co(S)$ \startmodif and its closure by $\bar S$\stopmodif. The identity matrix of order $n$ is denoted $I_n$. The null $m\times n$ matrix is denoted $0_{m\times n}$. For two symmetric matrices, $A$ and $B$, $A>B$ (\startmodif resp. \stopmodif $A<B$) means $A-B>0$ (\startmodif resp. \stopmodif $A-B<0$). Given a continuously differentiable function $f:\startmodif\R^{n-1}\times\R\stopmodif\to\R^n$, $\partial_xf\startmodif(\bar{x},\bar{u})\stopmodif$ \startmodif(resp. $\partial_uf(\bar{x},\bar{u})$)\stopmodif\ stands for the partial derivative of $f$ with respect to $x$ \startmodif(resp. to $u$)\stopmodif\ at $\startmodif(\bar{x},\bar{u})\in\R^{n-1}\times\R\stopmodif$. Let \startmodif$V:\R^{n-1}\to\R$ \stopmodif\ be a continuously differentiable function, $L_f\startmodif V(\bar{x},\bar{u})\stopmodif$ stands for the Lie derivative of \startmodif$V$ in the $f$-direction at $(\bar{x},\bar{u})\in\R^{n-1}\times\R$, i.e., $L_fV(\bar{x},\bar{u}):=\partial_xV(\bar{x})\cdot f(\bar{x},\bar{u})$\stopmodif. Let $c_\ell\in\Rs$ be a constant, we denote sets $\Omega_{c_\ell}(V)=\{x:V(x)<c_\ell\}$. By $r\B_1$ we denote an open ball with radius $r$ and centered at $x_0=0$.}
\section{Problem statement}\label{sec:problem statement}
 
{\itshape\on A. Class of systems.\off} In this work, we consider the class nonlinear systems defined by 
\begin{equation}\label{eq:general system}
\left\{\begin{array}{rcl}
\dx_1&=&f_1(x_1,x_2)+h_1(x_1,x_2,u)\\
\dx_2&=&f_2(x_1,x_2)u+h_2(x_1,x_2,u),
\end{array}\right.
\end{equation}
where $(x_1,x_2)\in\R^{n-1}\times\R$ is the state and $u\in\R$ is the input. Functions $f_1$, $f_2$, $h_1$ and $h_2$ are locally Lipschitz continuous satisfying $f_1(0,0)=0$, $h_1(0,0,0)=0$, \startmodif$h_2(0,0,0)=0$ \stopmodif and, $\forall(x_1,x_2)\in\R^{n-1}\times\R$, $f_2(x_1,x_2)\neq0$. \startmodif In\stopmodif\ a more compact notation, the vector $(x_1,x_2)\in\R^{n-1}\times\R$ is denoted by $x$, the $i$-th component of $x_1\in\R^{n-1}$ is denoted by $x_{1,i}$ and system \eqref{eq:general system} is denoted by $\dx=f_h(x,u)$. When $h_1(x_1,x_2,u)\equiv0$ and $h_2(x_1,x_2,u)\equiv0$, system \eqref{eq:general system} is denoted by $\dx=f(x,u)$.

\startmodif Consider\stopmodif\ system $\dx=f(x,u)$, \startmodif assuming\stopmodif\ stabilizability of the $x_1$-subsystem  \startmodif and applying backstepping\stopmodif, one may\startmodif\ design a feedback law $\varphi_b$ by solving an algebraic equation in the input variable $u$. The solution of this algebraic equation equation renders\stopmodif\ the origin  globally asymptotically stable for $\dx=f(x,\varphi_b\startmodif(x)\stopmodif)$ (e.g. \cite{KrsticKokotovicKanellakopoulos95}). However, \startmodif because functions $h_1$ and $h_2$ depend on $u$, the design of a feedback law for \eqref{eq:general system} \stopmodif leads to an implicit equation \startmodif in this variable (see \startmodif\eqref{eq:example:backstepping} below for an example). Thus,  \stopmodif backstepping may be difficult (if not impossible) to apply for \eqref{eq:general system}.

This \startmodif is the motivation to\stopmodif\ introduce the\startmodif\ hybrid feedback law design problem ensuring\stopmodif\ global asymptotic\startmodif\ stability\stopmodif\ of the origin for \eqref{eq:general system} in closed loop.

{\itshape\startmodif B. Preliminaries.\stopmodif} In this section, we give a brief introduction on {\em hybrid feedback laws} $\mathds{K}$ (see \cite{4380508} for further details).  It consists of a \startmodif finite\stopmodif\ discrete set $Q$, \startmodif closed sets $\mathscr{C}_q,\mathscr{D}_q\subset \R^n$ such that, $\forall q\in Q$, $\mathscr{C}_q\cup\mathscr{D}_q=\R^n$,\stopmodif\ continuous functions $\varphi_q:\R^n\to \R$ and \startmodif outer semicontinuous set-valued functions $g_q:\R^n\rightrightarrows Q$\stopmodif. The system \eqref{eq:general system} in closed loop with $\mathds{K}$ leads to a system with mixed dynamics\footnote{Continuous and discrete dynamics.}
\begin{equation}\label{eq:hybrid system}
 \left\{\begin{array}{rcll}
  \dot{x}&=&f_h(x,\varphi_q(x)),&x\in\mathscr{C}_q,\\
  q^+&\in&g_q(x),&x\in\mathscr{D}_q,
 \end{array}\right.
\end{equation}
with state space given by $\R^n\times Q$.

\startmodif In this work, we consider the framework provided in \cite{Goebeletal2012}. A set $S\subset\Ras\times\mathbb{N}$ is called a \emph{\on compact hybrid \off \startmodif time domain \stopmodif} (\cite[Definition 2.3]{Goebeletal2012}) if \on $S=\cup_{j=0}^{J-1}([t_j,t_{j+1}],j)$ for some finite sequence of times $0=t_0\leq t_1\leq\cdots\leq t_J$. It is a hybrid time domain if, $\forall(T,J)\in S$, $S\cap([0,T]\times\{0,\ldots,J\})$ is a compact hybrid time domain\off. A solution of \eqref{eq:hybrid system} is a pair of functions $(x,q):S\times S\to\R^n\times Q$ such that, during flows, $x$ evolves according to the differential equation $\dx=f_h(x,\varphi_q(x))$, while $q$ remains constant, and the constraint $x\in\C_q$ is satisfied. During jumps, $q$ evolves according to the difference inclusion $q^+\in g_q(x)$, while $x$ remains constant, and before a jump the constraint $x\in\D_q$ is satisfied.\stopmodif

Since, $\forall q\in Q$, each function $\varphi_q$ is continuous and each \startmodif set-valued \stopmodif function $g_q$ is outer semicontinuous \startmodif and locally bounded\footnote{\startmodif Because $\hat{Q}$ is finite.\stopmodif}\stopmodif, system \eqref{eq:hybrid system} satisfies the basic assumptions of \cite{4806347}.\footnote{\on The interested reader might also consult \cite{4380508} and \startmodif\cite{Goebeletal2012} \stopmodif \on for the sufficient conditions for the existence of solutions and  stability (of the origin or subsets of $\R^n$) concepts.\off} 

\subsection*{C. Assumptions}

\startmodif
\begin{assumption}{(Local hybrid controller)}\label{hyp:local stability} 

	Let $\hat{Q}\subset\mathbb{N}$ be a finite discrete set\on, f\off\startmodif or each $\hat{q}\in \hat{Q}$,

	${\on\bullet\off}$ the sets $\mathscr{C}^\ell_{\hat{q}}\subset\R^n$ and $\mathscr{D}^\ell_{\hat{q}}\subset\R^n$ are closed, and $\mathscr{C}_{\hat{q}}^\ell\cup\mathscr{D}_{\hat{q}}^\ell=\R^n$;
	
	${\on\bullet\off}$ $\varphi_{\hat{q}}^\ell:\R^n\to\R$ is a continuous function and $g_{\hat{q}}^\ell:\R^n\rightrightarrows\hat{Q}$ is an outer semicontinuous set-valued function; 
	
	${\on\bullet\off}$ $V_{\hat{q}}^\ell:\R^n\to\Ras$ is a $\mathcal{C}^1$ function satisfying, $\forall x\in\R^n$, $\alpha_1(||x||)\leq V^\ell_{\hat{q}}(x)\leq \alpha_2(||x||)$, where $\alpha_1,\alpha_2\in\mathcal{K}_\infty$. Moreover, let $c_{\ell}>0$ be a constant, it also satisfies, $\forall\hat{q}\in\hat{Q}$,
		\begin{eqnarray}
			\forall x\in(\Omega_{c_\ell}(V_{\hat{q}}^\ell)\cap\mathscr{C}_{\hat{q}})\setminus\{0\},&L_{f_{h}}V_{\hat{q}}^\ell(x,\varphi_{\hat{q}}^{\ell}(x))<0,\label{eq:hypothesis:1}\\
			\forall x\in(\Omega_{c_\ell}(V_{\hat{q}}^\ell)\cap\mathscr{D}_{\hat{q}})\setminus\{0\},&V_{\hat{q}^+}^\ell(x)-V_{\hat{q}}^\ell(x)<0.\label{eq:hypothesis:1:discrete}
		\end{eqnarray}
\end{assumption}

System \eqref{eq:general system} in closed loop with the local hybrid controller leads to the hybrid system
	\begin{equation}\label{eq:local hybrid system}
 		\left\{\begin{array}{rcll}
 			\dot{x}&=&f_h(x,\varphi^\ell_{\hat{q}}(x)),&x\in\mathscr{C}_{\hat{q}}^\ell,\\
  			\hat{q}^+&\in&g^\ell_{\hat{q}}(x),&x\in\mathscr{D}_{\hat{q}}^\ell.
			 \end{array}\right.
	\end{equation}
	Due to \cite[Theorem 20]{4806347}, Assumption \ref{hyp:local stability} implies that the set $\{0\}\times\hat{Q}$ (which will be called origin) is locally asymptotically stable for \eqref{eq:local hybrid system}. Whenever we are in a neighborhood of the origin, Equation \eqref{eq:hypothesis:1} implies that the Lyapunov function $\R^n\times\hat{Q}\ni(x,\hat{q})\mapsto V_{\hat{q}}(x)\in\Ras$ is strictly decreasing during a flow. Equation \eqref{eq:hypothesis:1:discrete} implies that, during a transition from a controller $\hat{q}$ to a controller $\hat{q}^+$, the value $V_{\hat{q}}(x)$ strictly decreases to $V_{\hat{q}^+}(x)$.
\stopmodif

 The second assumption provides bounds for the terms that impeach the direct application of the backstepping method. \startmodif It also\stopmodif\ concerns the global stabilizability of the origin for
\begin{equation}\label{eq:general subsystem:x1 h0}
\dx_1=f_1(x_1,x_2),
\end{equation}
when $x_2$ is considered as an input.

\begin{assumption}{(Bounds)}\label{hyp:bounds}
There exist a $\class^1$ positive definite and proper \startmodif function\stopmodif\ $V_1:\R^{n-1}\to\Ras$, a $\class^1$ function $\psi_1:\R^{n-1}\to\R$, a $\Kinf$ and locally Lipschitz function $\alpha:\Ras\to\Ras$ such that

a) (Stabilizing feedback to \eqref{eq:general subsystem:x1 h0}): $\forall x_1\in\R^{n-1}$, $L_{f_1}V_1(x_1,\psi_1(x_1))\leq-\alpha(V_1(x_1))$;\\
In addition, there exist a continuous function $\Psi:\R^n\to\R$ and two positive constants, $\varepsilon<1$ and $M$, satisfying

b) (Bound on $h_1$): $\startmodif\forall (x_1,x_2,u)\in\R^{n-1}\times\R\times\R$, $||h_1\startmodif(x_1,x_2,u)\stopmodif||\leq\Psi\startmodif(x_1,x_2)\stopmodif$ and $L_{h_1}V_1(x_1,\psi_1(x_1),u)\leq(1-\varepsilon)\alpha(V_1(x_1))+\varepsilon\alpha(M)$;

c) (Bound on $\partial_{x_2}h_1$): $\startmodif\forall (x_1,x_2,u)\in\R^{n-1}\times\R\times\R\stopmodif$, $\left|\left|\partial_{x_2} h_1\startmodif(x_1,x_2,u)\stopmodif\right|\right|\leq\Psi\startmodif(x_1,x_2)\stopmodif$;

d) (Bound on $h_2$): $\startmodif\forall (x_1,x_2,u)\in\R^{n-1}\times\R\times\R\stopmodif$, $||h_2\startmodif(x_1,x_2,u)\stopmodif||\leq\Psi\startmodif(x_1,x_2)\stopmodif$.
\end{assumption}

Before introducing the last assumption, let the set $\A\subset \R^n$ be given by
\begin{equation}\label{eq:general set:A}
\A=\{(x_1,x_2)\in\R^n:V_1(x_1)\leq M,x_2=\psi_1(x_1)\}.
\end{equation}
Since function $V_1$, given by Assumption \ref{hyp:bounds}, is proper, this set is compact. Moreover, we will prove in Proposition \ref{prop:global backstepping} that if Assumption \ref{hyp:bounds} holds, then there exists a feedback law $\varphi_g$ rendering $\A$ globally practically stable for $\dx=f_h(x,\varphi_g(x))$.

\begin{assumption}{(Inclusion)}\label{hyp:inclusion}
\startmodif For each $\hat{q}\in\hat{Q}$, each function $V_{\hat{q}}^\ell$ satisfies $$\displaystyle\max_{x\in\A}V_{\hat{q}}^\ell(x)<c_\ell.$$\stopmodif
\end{assumption}
The first and third assumptions together ensure that\startmodif,  for each $\hat{q}\in\hat{Q}$, $\A$\stopmodif\  is included in the basin of attraction of system \eqref{eq:local hybrid system}. In the following section, it will be shown that the above assumptions are sufficient to solve the problem under consideration.

\section{Results}\label{sec:results}

\startmodif Before stating the first result, we recall the concept of global practical asymptotical stability. A compact set $\bS\subset\R^n$ \on containing the origin \off is \emph{globally practically asymptotically stabilizable} for \eqref{eq:general system} if, $\forall a\in\Rs$, there exists a controller $\varphi_g$ such the set $\bS + a\B_1$ contains a set \on that \off is globally asymptotically stable for $\dx=f_h(x,\varphi_g(x))$ \on (\cite{Teel:1999})\off.
\stopmodif

\begin{proposition}\label{prop:global backstepping}
Under Assumption \ref{hyp:bounds}, $\A$ is globally practically \startmodif stabilizable for \eqref{eq:general system}.
\end{proposition}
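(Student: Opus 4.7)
The plan is a backstepping-style Lyapunov argument tailored to the residual term that Assumption~\ref{hyp:bounds}(b) allows. I introduce the error coordinate $z = x_2 - \psi_1(x_1)$, so that $\A = \{V_1(x_1)\le M,\ z=0\}$, and the composite candidate $W(x) = V_1(x_1) + \tfrac{1}{2}z^2$. Since $V_1$ is proper, $W$ is proper and its sublevel sets are compact. The goal becomes: for any $a>0$, exhibit a continuous feedback $\varphi_g$ and a sublevel set $\{W\le M+\delta\}\subset \A + a\B_1$ that is globally asymptotically stable for $\dot x = f_h(x,\varphi_g(x))$.

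Differentiating along \eqref{eq:general system} and splitting each $x_1$-Lie derivative at $x_2=\psi_1(x_1)$, I write
\begin{equation*}
\dot W = L_{f_1}V_1(x_1,\psi_1(x_1)) + L_{h_1}V_1(x_1,\psi_1(x_1),u) + \Delta(x,u) + z\bigl[f_2(x)u + h_2(x,u) - \partial_{x_1}\psi_1(x_1)\cdot(f_1(x)+h_1(x,u))\bigr],
\end{equation*}
where $\Delta$ is the sum of the two increments produced by the split. The mean value theorem combined with Assumption~\ref{hyp:bounds}(c) gives $|\Delta(x,u)|\le |z|R_1(x)$ for a continuous $R_1\ge 0$ independent of $u$; Assumption~\ref{hyp:bounds}(b)(d) bound the $u$-dependent summands inside the bracket by $|z|R_2(x)$ for a continuous $R_2\ge 0$, again independent of $u$. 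Using Assumption~\ref{hyp:bounds}(a)--(b) on the first two summands yields
\begin{equation*}
\dot W \le -\varepsilon\alpha(V_1(x_1)) + \varepsilon\alpha(M) + z\,f_2(x)\,u + |z|\,R(x),\qquad R:=R_1+R_2.
\end{equation*}

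Next I pick $\varphi_g(x) = -\frac{z}{f_2(x)}\bigl(K + \frac{R(x)^2}{4\mu}\bigr)$ for positive constants $K,\mu$ to be tuned as functions of $a$; this is continuous because $f_2$ never vanishes. Young's inequality $|z|R(x)\le \frac{R(x)^2}{4\mu}z^2 + \mu$ then collapses the estimate to
\begin{equation*}
\dot W \le -\varepsilon\alpha(V_1(x_1)) + \varepsilon\alpha(M) - K z^2 + \mu .
\end{equation*}
Since $\alpha\in\Kinf$, the first two terms are strictly negative whenever $V_1(x_1)>M$, and $-Kz^2+\mu$ is strictly negative whenever $z^2>\mu/K$. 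A compactness argument using properness of $V_1$ and continuity of $\psi_1$ supplies $\delta>0$ with $\{W\le M+\delta\}\subset \A+a\B_1$, and then choosing $\mu$ small and $K$ large as functions of $\delta$ makes $\dot W<0$ throughout $\{W>M+\delta\}$. Invariance of the sublevel set together with a standard Lyapunov argument delivers global asymptotic stability of a compact set contained in $\A+a\B_1$, i.e.\ practical stabilizability of $\A$.

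The main obstacle, and the very reason that textbook backstepping fails for \eqref{eq:general system}, is that $h_1$ and $h_2$ depend on $u$: the natural choice of $\varphi_g$ cancelling the bracket in $z\dot z$ is the solution of an implicit algebraic equation in $u$. The route above sidesteps that by exploiting the $u$-uniform bounds $\|h_i\|\le \Psi$ and $\|\partial_{x_2}h_1\|\le \Psi$ from Assumption~\ref{hyp:bounds} to collect every $u$-dependent perturbation into $|z|R(x)$ \emph{before} choosing the feedback, after which domination by a state-dependent gain is routine; the residual $\varepsilon\alpha(M)$ is precisely what forces us to settle for convergence to $\A$ rather than to the origin.
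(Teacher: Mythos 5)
Your overall strategy is the same as the paper's: form a composite Lyapunov function from $V_1$ and the error $z=x_2-\psi_1(x_1)$, use the $u$-uniform bounds of Assumption~\ref{hyp:bounds}(b)--(d) to collect every $u$-dependent contribution into a term $|z|R(x)$ with $R$ continuous and independent of $u$, and then dominate that term with a state-dependent nonlinear-damping gain via Young's inequality. That part is essentially the paper's computation, up to one bookkeeping slip: the $u$-\emph{independent} summand $-z\,\partial_{x_1}\psi_1(x_1)\cdot f_1(x)$ inside your bracket is neither cancelled by your $\varphi_g$ nor covered by $R=R_1+R_2$ as you define them (the paper cancels the $f_1$-part explicitly through the feedback \eqref{eq:proof:proposition:psi feedback}); this is harmless, since it is $|z|$ times a continuous function of $x$ and can simply be absorbed into $R$.

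The genuine gap is the claim that a compactness argument supplies $\delta>0$ with $\{W\le M+\delta\}\subset\A+a\B_1$ for the \emph{fixed} weight $W=V_1+\tfrac12 z^2$. This is false once $a$ is small. Any sublevel set of $W$ that your Lyapunov estimate certifies as invariant and attractive must contain the set where the bound $-\varepsilon\alpha(V_1)+\varepsilon\alpha(M)-Kz^2+\mu$ is nonnegative, and that set contains $\A$ itself (on $\A$ the bound equals $\varepsilon[\alpha(M)-\alpha(V_1(x_1))]+\mu\ge\mu>0$); hence the level must exceed $\max_{\A}W=M$. But $\{V_1(x_1)+\tfrac12 z^2\le M\}$ already contains the segment $\{x_1=0,\ |x_2-\psi_1(0)|\le\sqrt{2M}\}$, whose points in general sit at a distance of order $\sqrt{2M}$ from $\A$ (take $\psi_1\equiv0$, $V_1(x_1)=\|x_1\|^2/2$ to see this exactly), so no sublevel set of $W$ with level above $M$ fits inside $\A+a\B_1$ when $a<\sqrt{2M}$. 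The weight on $z^2$ cannot be a universal constant: it must be a large parameter $K_V=K_V(a)$ chosen so that the sublevel set $\{V_1+K_V z^2<M+1/K_V\}$ is squeezed onto $\A$ in the $z$-direction while barely enlarging it in the $x_1$-direction. This is precisely the content of Lemma~\ref{lem:omega included in A+aB}, which the paper proves by a compactness/contradiction argument and which is the one missing ingredient in your proposal; with it, your estimate (with $\tfrac12$ replaced by $K_V$, which only rescales $R$ and the gain) and your tuning of $\mu$ small and $K$ large go through and recover the paper's proof.
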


\on The proof of Proposition \ref{prop:global backstepping} is provided in Section \ref{proof:prop:global backstepping}. \off We can now state the main result.

\begin{theorem}\label{thm:hybrid feedback}\startmodif
Under Assumptions \ref{hyp:local stability}, \ref{hyp:bounds} and \ref{hyp:inclusion}, there exists a continuous controller $\varphi_g:\R^n\to\R$; a suitable choice of a constant value $\tilde c_\ell$ satisfying $0<\tilde{c}_\ell<c_\ell$; a hybrid state feedback law $\mathds{K}$ defined by $Q:=\{1,2\}\times \hat{Q}$ and, $\forall \hat{q}\in\hat{Q}$, subsets of $\R^n$
\normalsize
\begin{equation}\label{def:C}
\begin{array}{rcl}
\mathscr{C}_{1,\hat{q}}=\overline{\Omega_{c_\ell}(V_{\hat{q}}^\ell)}\cap\mathscr{C}_{\hat{q}}^\ell,
&\mathscr{C}_{2,\hat{q}}=\overline{\R^n\setminus\Omega_{\tilde{c}_\ell}(V_{\hat{q}}^\ell)},\\ \\\multicolumn{3}{c}{\mathscr{D}_{2,\hat{q}}=\overline{\Omega_{\tilde{c}_\ell}(V_{\hat{q}}^\ell)},}\\ \\
\multicolumn{3}{l}{\mathscr{D}_{1,\hat{q}}=(\overline{\Omega_{c_\ell}(V_{\hat{q}}^\ell)}\cap\mathscr{D}_{\hat{q}}^\ell)\cup(\overline{\R^n\setminus\Omega_{{c}_\ell}(V_{\hat{q}}^\ell)})}
\end{array}
\end{equation}
and functions
\begin{equation}\label{def:phi1}
\begin{array}{rcl}
	\multicolumn{3}{l}{
	\begin{array}{rcl|rcl}
	\varphi_{1,\hat{q}}:\mathscr{C}_{1,\hat{q}}&\to&\R&\varphi_{2,\hat{q}}:\mathscr{C}_{2,\hat{q}}&\to&\R\\
	x&\mapsto&\varphi_{\hat{q}}^\ell(x)
	&x&\mapsto&\varphi_g(x)\end{array}}\\
	\hline\multicolumn{3}{c}{\begin{array}{rcl} 
	 g_{2,\hat{q}}:\mathscr{D}_{2,\hat{q}}&\rightrightarrows&Q\\
	x&\mapsto&\{(1,\hat{q})\}\end{array}}\\
	\hline g_{1,\hat{q}}:\mathscr{D}_{1,\hat{q}}&\rightrightarrows&Q\\
	\multicolumn{3}{c}{x\mapsto\left\{\begin{array}{rl}
									\{(1,g_{\hat{q}}^\ell(x))\},&x\in(\Omega_{c_\ell}(V_{\hat{q}}^\ell)\cap\mathscr{D}_{\hat{q}}^\ell)\\
									\{(2,\hat{q})\},&x\in(\R^n\setminus\overline{\Omega_{{c}_\ell}(V_{\hat{q}}^\ell)})\\
									\{(1,g_{\hat{q}}^\ell(x)),(2,\hat{q})\},&x\in(\partial\Omega_{c_\ell}(V_{\hat{q}}^\ell)\cap\mathscr{D}_{\hat{q}}^\ell)
								 \end{array}\right.}
\end{array}
\end{equation}
such that the origin is globally asymptotically stable for system \eqref{eq:general system} in closed loop with $\mathds{K}$.
\end{theorem}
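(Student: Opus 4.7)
The picture is the following: the controller $\varphi_g$ furnished by Proposition \ref{prop:global backstepping} steers every trajectory into a neighborhood of the compact attractor $\A$ small enough to lie inside the basin of attraction of the local hybrid controller of Assumption \ref{hyp:local stability}, where the $\varphi_{\hat q}^\ell$ then take over and drive $x$ to the origin. The hysteresis levels $\tilde c_\ell<c_\ell$ are used to prevent chattering between the two regimes.

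The construction of $\tilde c_\ell$ and $\varphi_g$ comes first. Using finiteness of $\hat Q$, continuity of each $V_{\hat q}^\ell$ and Assumption \ref{hyp:inclusion}, I pick $\tilde c_\ell$ in the nonempty interval $\bigl(\max_{\hat q\in\hat Q}\max_{x\in\A}V_{\hat q}^\ell(x),\,c_\ell\bigr)$. Compactness of $\A$ and continuity of the $V_{\hat q}^\ell$ then yield $a>0$ with $\A+a\B_1\subset\Omega_{\tilde c_\ell}(V_{\hat q}^\ell)$ for every $\hat q\in\hat Q$. Applying Proposition \ref{prop:global backstepping} with this $a$ provides a continuous controller $\varphi_g$ such that a compact set contained in $\A+a\B_1$ is globally asymptotically stable for $\dx=f_h(x,\varphi_g(x))$. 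With these choices, the data in \eqref{def:C}--\eqref{def:phi1} satisfy the basic hybrid assumptions of \cite{4806347}: each $\mathscr{C}_{i,\hat q}$ and $\mathscr{D}_{i,\hat q}$ is closed by construction, their union equals $\R^n$ (for $i=1$ using $\mathscr{C}^\ell_{\hat q}\cup\mathscr{D}^\ell_{\hat q}=\R^n$; for $i=2$ trivially), each $\varphi_{i,\hat q}$ is continuous, and each $g_{i,\hat q}$ is outer semicontinuous with compact values---the three-branch definition of $g_{1,\hat q}$ on the boundary $\partial\Omega_{c_\ell}(V_{\hat q}^\ell)\cap\mathscr{D}_{\hat q}^\ell$ is precisely what guarantees outer semicontinuity there.

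The stability analysis then rests on two observations. First, mode $1$ is absorbing: if at some hybrid time $(t,j)$ one has $q(t,j)=(1,\hat q)$ and $V_{\hat q}^\ell(x(t,j))\le\tilde c_\ell$, then flows in $\mathscr{C}_{1,\hat q}\subset\overline{\Omega_{c_\ell}(V_{\hat q}^\ell)}$ strictly decrease $V_{\hat q}^\ell$ by \eqref{eq:hypothesis:1}, while discrete transitions driven by $g^\ell_{\hat q}$ strictly decrease $V$ by \eqref{eq:hypothesis:1:discrete}; in particular $V$ never exceeds $\tilde c_\ell$, so $x$ never visits $\R^n\setminus\overline{\Omega_{c_\ell}(V_{\hat q}^\ell)}$ and therefore no jump to mode $2$ can ever occur. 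Second, any solution that starts in mode $2$ must eventually enter this absorbing regime: by Proposition \ref{prop:global backstepping} the continuous flow $\dx=f_h(x,\varphi_g(x))$ converges to a compact asymptotically stable set contained in $\A+a\B_1\subset\Omega_{\tilde c_\ell}(V_{\hat q}^\ell)$, hence $x$ enters the open set $\Omega_{\tilde c_\ell}(V_{\hat q}^\ell)$ in finite ordinary time, and as soon as it does the constraint $x\notin\mathscr{C}_{2,\hat q}$ forces the jump $g_{2,\hat q}(x)=\{(1,\hat q)\}$ with $V_{\hat q}^\ell(x)<\tilde c_\ell$.

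Putting the two observations together gives the conclusion. Lyapunov stability of $\{0\}\times Q$ is inherited from Assumption \ref{hyp:local stability} via \cite[Theorem 20]{4806347}: solutions starting in a sufficiently small neighborhood of the origin already lie in the absorbing mode-$1$ regime and converge to the origin without ever visiting mode $2$. Global attractivity follows by combining the finite-time mode-$2$ transit with the asymptotic convergence inside mode $1$. The step I expect to require the most care is the quantitative matching between the class-$\mathcal{KL}$ estimate behind the practical stability of Proposition \ref{prop:global backstepping} and the Lyapunov estimate of Assumption \ref{hyp:local stability}: one must show that the mode-$2$ excursion time, and the corresponding excursion in $\|x\|$, are bounded uniformly on bounded initial sets, so as to upgrade Lyapunov stability at the origin plus global attractivity into genuine global asymptotic stability in the hybrid sense of \cite{Goebeletal2012}.
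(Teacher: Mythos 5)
Your proposal is correct and follows essentially the same route as the paper: choose $\tilde c_\ell$ between $\max_{\hat q}\max_{x\in\A}V_{\hat q}^\ell$ and $c_\ell$ (equivalently, pick $a$ so that $\max_{x\in\A+a\B_1}V_{\hat q}^\ell(x)<\tilde c_\ell$), use Proposition \ref{prop:global backstepping} to drive trajectories into $\Omega_{\tilde c_\ell}(V_{\hat q}^\ell)$ under mode $2$, force the jump to mode $1$ there, and invoke Assumption \ref{hyp:local stability} for convergence; your ``mode $1$ is absorbing'' observation is just a cleaner packaging of the paper's case-by-case analysis. The final uniformity concern you flag is real but is also left implicit in the paper, which likewise concludes from local stability plus global attractivity (this is standard for well-posed hybrid systems satisfying the basic conditions of \cite{4806347}).
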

\stopmodif

 Theorem \ref{thm:hybrid feedback} is more than an existence result since its proof allows one to design a hybrid feedback law \startmodif that globally stabilizes the origin\stopmodif. The complete proof of Theorem 1 is presented in Section \ref{sec:proof:thm 1}.

\begin{remark}
\startmodif  These results are useful for systems that do not satisfy the Brockett necessary condition for the existence of a continuous stabilizing controller and for which there exists a locally stabilizing hybrid feedback law (see e.g. \cite[Example 38]{4806347} and \cite{HespanhaMorse99}).
\stopmodif
\end{remark}

The problems concerning the design of a feedback law by backstepping to \eqref{eq:general system} that renders $\A$ globally practically stable and blends different feedback laws according to each basin of attraction is solved. In the next section, under Assumption \ref{hyp:bounds},  a local continuous feedback law satisfying Assumptions \ref{hyp:local stability} and \ref{hyp:inclusion} is designed using an over-approximation of \eqref{eq:general system}.

\section{A method to design a local stabilizing controller}\label{sec:checking the assumptions}

Based on the approach presented on \cite{AndrieuTarbouriech2011}, we formulate the nonlinear dynamics of \eqref{eq:general system} in terms of a Linear Differential Inclusion. \startmodif Let $\hat{Q}$ be a singleton, we start by defining a neighborhood $\V_\mu$ of the origin such that: a) there exist a continuous feedback law $\varphi_{\hat{q}}^\ell$ and a Lyapunov function $V_{\hat{q}}^\ell$ satisfying, $\forall x\in\V_\mu\setminus\{0\}$, $L_{f_h}V_{\hat{q}}^\ell(x,\varphi_{\hat{q}}^\ell(x))<0$; b) it strictly contains an estimation of the basin of attraction of $\dx=f_h(x,\varphi_{\hat{q}}^\ell(x))$ and a convex set containing $\A$. These two set inclusions follow from two over-approximations of $\mathbf{A}$.\stopmodif

\startmodif Under \stopmodif Assumption \ref{hyp:bounds}, \startmodif there exist\stopmodif\footnote{\startmodif Because $\A$ is a compact set.\stopmodif} a finite set $\P\subset\mathbb{N}$ of indexes and $\{x_p\}_{p\in\P}$ vectors of $\R^n$ such that
\begin{equation}\label{eq:A included in a convex}
\A\subset\co(\{x_p\}_{p\in\P}).
\end{equation}

Let $\mu_u>0$ be a constant and $\mu=[\mu_1,\mu_2,\ldots,\mu_n]\in\R^{n}$ be a vector of positive values such that $\co(\{x_p\}_{p\in\P})\subset\V_\mu=\{x:|x_i|\leq\mu_i,i=1,2,\ldots,n\}$. 

Consider the function
\begin{equation}\label{eq:general system:difference}
\tilde{f}_h(x,u)=f_h(x,u)-Fx-Gu,
\end{equation}
where $F$ and $G$ are\startmodif\ the linearization of \eqref{eq:general system} around the origin:
\begin{equation}\label{eq:general system:origin}
\dx=Fx+Gu:=\partial_x f_h(0)x+\partial_uf_h(0)u.
\end{equation}
Since $f_h$ is $\class^1$, $\tilde{f}_h$ is also $\class^1$. \on In the following, an elementwise over-approximation is made of the matrices in \eqref{eq:general system:origin}. \off \stopmodif For each $l\in\mathscr{L}:=\{l\in\mathbb{N}:1\leq l\leq2^{n^2}\}$, let $C_l\in\R^{n\times n}$ be a matrix with components given by either
\begin{equation}\label{eq:matrix C:elements}
\begin{split}
c_{ij}^+=\displaystyle\max_{x\in \V_\mu, |u|\leq\mu_u} \partial_{x_j} \tilde{f}_{h,i}(x,u)\ \text{or}\\
c_{ij}^-=\displaystyle\min_{x\in \V_\mu, |u|\leq\mu_u}&\partial_{x_j }\tilde{f}_{h,i}(x,u).
\end{split}
\end{equation}
For each $m\in\mathscr{M}:=\{m\in\mathbb{N}:1\leq m\leq2^n\}$, let $D_m\in\R^{n\times 1}$ be a vector with components given by either
\begin{equation}\label{eq:matrix D:elements}
\begin{split}
d_i^+=\displaystyle\max_{x\in \V_\mu, |u|\leq\mu_u}  \partial_u \tilde{f}_{h,i}(x,u)\ \text{or}\\
d_i^-=\displaystyle\min_{x\in \V_\mu, |u|\leq\mu_u}&  \partial_u \tilde{f}_{h,i}(x,u).
\end{split} 
\end{equation}

For each $i\in\mathscr{I}:=\{i\in\mathbb{N}:1\leq i\leq n\}$, the mean value theorem ensures, for all $x\in\V_\mu$ and $|u|\leq\mu_u$, the existence of points $\overline{x}$ and $\overline{u}$ satisfying $\tilde{f}_{h,i}(x,u)=\partial_x \tilde{f}_{h,i}(\overline{x},\overline{u})x_i+\partial_u\tilde{f}_{h,i}(\overline{x},\overline{u})u$. This implies that, for all $x\in\V_\mu$, $|u|\leq\mu_u$, $l\in\L$ and $m\in\Mi$, \eqref{eq:general system} may be over-approximated by
\begin{equation}\label{eq:general LDI}
\dx\in\co\{(F+C_l)x+(G+D_m)u\}.
\end{equation}

\begin{remark}\startmodif
	This linear differential inclusion allows us to go further than the linearization \eqref{eq:general system:origin} because we take into account the  gradient of the nonlinear terms. The precision of this over-approximation method depends basically on two aspects: the size of the neighborhood $\mathbf{V}_\mu$ considered and the rate of change of the nonlinear terms $\tilde{f}_h$.
	\end{remark}\stopmodif


Let us consider the canonical basis in $\R^n$, i.e., the set of vectors $\{e_s\}_{s=1}^n$, where the components are all 0 except the $s$-th one which is equals to 1.
\begin{proposition}\label{prop:local feedback}
Assume that there exist a symmetric positive definite matrix $W\in\R^{n\times n}$ and a matrix $H\in\R^{n\times 1}$ satisfying, for all $l\in\L$ and $m\in\Mi$, 
\begin{equation}\label{eq:LMI:a}
\begin{split}
W(F+C_l)^T+H(G+D_m)^T+(F+C_l)W\\
+(G+D_m)&H^T<0,
\end{split}
\end{equation}
\begin{equation}\label{eq:condition:derivee de Lyap}
\begin{bmatrix}
\mu_s^2W&We_s\\
\ast&1
\end{bmatrix}\geq0, s=1,2,\ldots,n,
\end{equation}
\begin{equation}\label{eq:condition:coA}
\begin{bmatrix}
1&x_p^T\\
\ast&W
\end{bmatrix}\geq0,\quad p\in\P,
\end{equation}
and
\begin{equation}\label{eq:condition:K}
\begin{bmatrix}
\mu_u^2W&H\\
\ast&1
\end{bmatrix}\geq0.
\end{equation}
Then, \startmodif by letting $\hat{Q}=\{1\}$, $V_1^\ell(x)=x^TPx$, where $P=W^{-1}$, $c_\ell=1$, $\mathscr{C}_1^\ell=\overline{\Omega_1(V_1^\ell)}$, $\mathscr{D}_1^\ell=\overline{\R^n\setminus\mathscr{C}_1^\ell}$, $g_1^\ell(x)\equiv1$ and $\varphi_1^\ell(x)=Kx$, where $K=H^TP$, Assumptions \ref{hyp:local stability} and \ref{hyp:inclusion} hold.\stopmodif
\end{proposition}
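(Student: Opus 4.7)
The plan is to translate each of the four LMIs in the hypothesis into an equivalent statement about $P := W^{-1}$, $K := H^T P$ and the quadratic candidate $V_1^\ell(x) = x^T P x$, by combining the Schur complement with congruence transformations $X \mapsto PXP$. Concretely: (i) Schur applied to \eqref{eq:condition:derivee de Lyap} and conjugation by $P$ yields $\mu_s^2 P \geq e_s e_s^T$ for every coordinate $s$, whence $x_s^2 \leq \mu_s^2\, V_1^\ell(x)$ and therefore $\overline{\Omega_1(V_1^\ell)} \subset \V_\mu$; (ii) the same treatment of \eqref{eq:condition:K} gives $\mu_u^2 P \geq K^T K$, so $|Kx| \leq \mu_u$ on $\overline{\Omega_1(V_1^\ell)}$; (iii) Schur on \eqref{eq:condition:coA} gives $x_p^T P x_p \leq 1$ for every $p \in \P$; and (iv) pre- and post-multiplying \eqref{eq:LMI:a} by $P$ and using $K = H^T P$ produces, for every $l \in \L$ and $m \in \Mi$, the closed-loop Lyapunov inequality
\[ (F + C_l + (G+D_m)K)^T P + P(F + C_l + (G+D_m)K) < 0.\]

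To verify Assumption \ref{hyp:local stability} with $c_\ell = 1$, I first observe that the discrete condition \eqref{eq:hypothesis:1:discrete} is vacuous: by construction $\mathscr{D}_1^\ell = \overline{\R^n \setminus \overline{\Omega_1(V_1^\ell)}}$, which is disjoint from the open sublevel set $\Omega_1(V_1^\ell)$. For the flow condition \eqref{eq:hypothesis:1}, fix $x \in \Omega_1(V_1^\ell) \setminus \{0\}$. By (i) and (ii), $x \in \V_\mu$ and $|Kx| \leq \mu_u$, so the inclusion \eqref{eq:general LDI} applies at $(x, Kx)$ and provides convex multipliers $\{\lambda_{l,m}\}$ with $f_h(x, Kx) = \sum_{l,m} \lambda_{l,m}[(F+C_l) + (G+D_m)K]\,x$. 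Combining this with (iv) yields
\[ L_{f_h} V_1^\ell(x, Kx) = \sum_{l,m} \lambda_{l,m}\, x^T\bigl[(F+C_l+(G+D_m)K)^T P + P(F+C_l+(G+D_m)K)\bigr] x < 0,\]
which is exactly \eqref{eq:hypothesis:1}. The bounds $\alpha_1(\|x\|) \leq V_1^\ell(x) \leq \alpha_2(\|x\|)$ hold with $\alpha_1, \alpha_2$ proportional to $\|\cdot\|^2$ since $P$ is positive definite.

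Assumption \ref{hyp:inclusion} then follows from convexity of $V_1^\ell$ and (iii): for any $x \in \co\{x_p\}_{p\in\P}$, $V_1^\ell(x) \leq \max_p V_1^\ell(x_p) \leq 1$; combined with \eqref{eq:A included in a convex}, this controls $V_1^\ell$ on $\A$. The main delicacy I foresee is upgrading this bound to the \emph{strict} inequality required by Assumption \ref{hyp:inclusion}: the LMI \eqref{eq:condition:coA} is non-strict, so a vertex $x_p$ may saturate $V_1^\ell(x_p) = 1$. I would resolve this either by reading \eqref{eq:condition:coA} as strict (a standard convention in LMI-based synthesis, since feasibility can always be perturbed to strict feasibility), or by enlarging the target level slightly by choosing $c_\ell = 1 + \epsilon$ for small $\epsilon > 0$ and re-checking that (i), (ii), (iv) still deliver strict decrease on the enlarged sublevel set. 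The rest is a direct congruence-plus-Schur exercise, so this strict-versus-non-strict passage is where I expect to spend most care.
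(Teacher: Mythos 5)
Your proof follows essentially the same route as the paper's: Schur complements plus congruence by $P=W^{-1}$ to convert each LMI into the inclusion $\overline{\Omega_1(V_1^\ell)}\subset\V_\mu$, the bound $|Kx|\leq\mu_u$ on that sublevel set, the containment $\co(\{x_p\}_{p\in\P})\subset\overline{\Omega_1(V_1^\ell)}$, and the vertex Lyapunov inequalities, which combine with the differential inclusion \eqref{eq:general LDI} to yield \eqref{eq:hypothesis:1}. You are in fact more explicit than the paper on two points it glosses over --- the vacuity of the jump condition \eqref{eq:hypothesis:1:discrete} for the chosen $\mathscr{D}_1^\ell$, and the strict inequality demanded by Assumption \ref{hyp:inclusion} versus the non-strict LMI \eqref{eq:condition:coA} --- and your proposed remedies for the latter are standard and adequate.
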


The proof of Proposition \ref{prop:local feedback} \on is provided in \off Section \ref{sec:proof:local feedback}.

\section{Illustration}\label{simu}

Let us consider a class of systems given by
\begin{equation}\label{eq:example:1}
  \left\{\begin{array}{rcl}
   \dot{x}_1&=&x_1+x_2+\theta [x_1^2+(1+x_1)\sin (u)]\\
   \dot{x}_2&=&u,
  \end{array}\right.
 \end{equation}
where $\theta\in\Rs$ is a constant. We will show in the following that, due to the presence of the term $\theta (1+x_1)\sin(u)$ in the time-derivative of $x_1$, it is not possible to apply the backstepping technique to design a feedback law. Let $f_1(x_1,x_2)=x_1+x_2+\theta x_1^2$, $f_2(x_1,x_2)\equiv1$, $h_1(x_1,x_2,u)=\theta(1+x_1)\sin(u)$ and $h_2(x_1,x_2,u)\equiv0$. Before applying Proposition \ref{prop:global backstepping} and Theorem \ref{thm:hybrid feedback}, we check their assumptions.

\subsection{Checking assumptions for \eqref{eq:example:1}}

\subsubsection{Assumption \ref{hyp:bounds}} 
It is possible to check that item a) holds with $V_1(x_1)=x_1^2/2$, $\psi_1(x_1)=-(1+K_1)x_1-\theta x_1^2$, and $\alpha(s)=2K_1s$, where $K_1\in\Rs$ is constant. Items b)-d) hold with $\Psi(x_1,x_2)=\theta(1+|x_1|)$, $\varepsilon\leq 1-3\theta/(2K_1)$ and\footnote{\startmodif With $\varepsilon\leq 1-3\theta/(2K_1)$ and the condition $\varepsilon>0$, we get the lower bound for $K_{1}>\frac{3\theta}{2}$.\stopmodif} $M\geq \theta/(4K_1\varepsilon)$.

Since Assumption \ref{hyp:bounds}a) holds, we could try to apply the backstepping technique directly in \eqref{eq:example:1}. Following this procedure, we intend to use $V(x_1,x_2)=V_1(x_1)+(x_2-\psi_1(x_1))^2/2$ as a control Lyapunov function for \eqref{eq:example:1}. Taking its Lie derivative, algebraic computations yield, $\forall(x_1,x_2,u)\in\R^{n-1}\times\R\times\R$,
\begin{equation}\label{eq:example:backstepping}
\begin{array}{l}
\startmodif L_{f_h}V(x_1,x_2,u)\leq\startmodif-K_{1}x_{1}^{2} +x_{1}\theta(1+x_{1})\cdot\sin(u)\\
			\startmodif +(x_{2}-\psi_{1}(x_{1}))(u+x_{1}/2
			\startmodif+(1+K_{1}+2\theta K_{1}x_1)\\
			\startmodif \cdot(x_{1}+x_{2}+\theta[x_{1}^{2}+(1+x_{1})
			\startmodif\cdot\sin(u)])).\stopmodif
\end{array}
\end{equation}
In order to have a term proportional to $(x_2-\psi_1(x_1))^2$ in the right-hand side of \eqref{eq:example:backstepping}, we should solve an implicit equation in the variable $u$ defined as $E(x_1,x_2,u)=-K_1x_1^2+L(x_2-\psi_1(x_1))^2$, where $E$ is the right-hand side of \eqref{eq:example:backstepping} and $L>0$ is a constant. Since this seems to be difficult (if not impossible), it motivates the design a hybrid feedback by applying Theorem \ref{thm:hybrid feedback}.

\subsubsection{Assumptions \ref{hyp:local stability} and \ref{hyp:inclusion}}
From the previous definitions of $V_1$ and $\psi_1$, we get $\A=\{(x_1,x_2):|x_1|<\sqrt{2M},x_2=-(1+K_1)x_1-\theta x_1^2\}.$

Let us first establish sets $\P$ and $\{x_p\}_{p\in\P}$ such that \eqref{eq:A included in a convex} holds. Define constants  $a^+=\max_{|x_1|<\sqrt{2M}} -(1+K_1)- 2\theta x_1=-(1+K_1)+ 2\theta\sqrt{2M}$ and $a^-=\min_{|x_1|<\sqrt{2M}} -(1+K_1)- 2\theta x_1=-(1+K_1)- 2\theta\sqrt{2M}$ by computing the derivatives of $\psi_1$ and let $\P=\{1,2,3,4\}$. From the mean value theorem we have, $\forall x\in\A$, $a^-\cdot x_1\leq x_2\leq a^+\cdot x_1$. This implies that
\begin{equation}\label{eq:example:A included in a convex}
\begin{split}
\A\subseteq\co((\{\sqrt{2M}\}\times\{x_{2}^{+,<0},x_{2}^{-,<0}\})\\
\cup(\{-\sqrt{2M}\}\times\{x_{2}^{+,>0},&x_{2}^{-,>0}\})),
\end{split}
\end{equation}
where
\begin{equation}\label{eq:vertices of coA}
\begin{array}{rl}
x_{2}^{+,>0}=-a^+\sqrt{2M},&x_{2}^{+,<0}=a^+\sqrt{2M},\\
x_{2}^{-,>0}=-a^-\sqrt{2M},&x_{2}^{-,<0}=a^-\sqrt{2M}. 
\end{array}
\end{equation}
Figure \ref{fig:coA:0.1} shows this inclusion.
\begin{figure}[ht!]
 \centering
 \includegraphics[width=6.5cm]{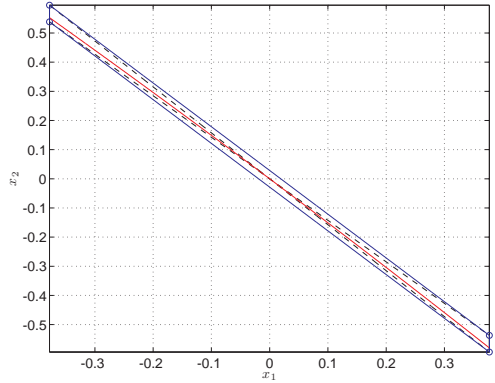}
 \caption{The sets $\A$ (in red) and the convex set defined in \eqref{eq:example:A included in a convex} (in blue) are presented in solid line. The circles are the vertexes of this set. The dashed straight lines which bound $\A$ are given by functions $x_1\mapsto a^+x_1$ and $x_1\mapsto a^-x_1$.}
 \label{fig:coA:0.1}
\end{figure}

\begin{figure}[ht!]
 \centering
 \includegraphics[width=7cm]{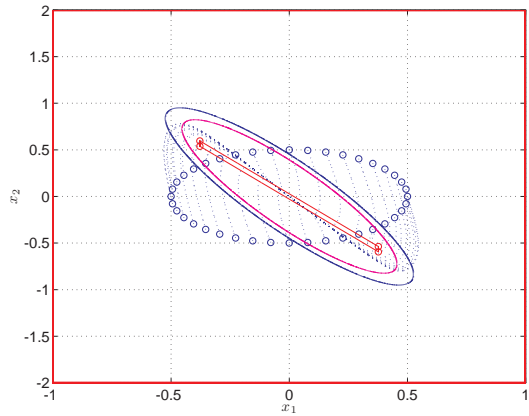}
\caption{The sets $\V_\mu$ (in red), $\partial\Omega_1(x^TPx)$ (in blue), and the inclusion \eqref{eq:example:A included in a convex} (in red) at the center. Initial conditions are points given in a ball of radius 0.5 and centered at the origin.}
	\label{fig:coA:0.1:b}
\end{figure}

A necessary condition for feasibility of the Linear Matrix Inequalites of Proposition \ref{prop:local feedback} is $\A\subset\V_\mu$. This follows from the inequalities $\sqrt{2M}<\mu_1$ and $|a^\pm\sqrt{2M}|<\mu_2$. These inequalities imply that $K_1$ must satisfy 
\begin{equation}\label{eq:feasability:K1}
\tfrac{\theta}{2}\left(\textstyle\frac{1}{\mu_1^{2}}+3\right)<K_1<\textstyle\frac{\mu_2}{\mu_1}-2\theta\mu_1-1.
\end{equation}
\startmodif
\begin{remark}
	Equation \eqref{eq:feasability:K1} imposes a limitation on the speed of response, since $K_1$ is lower and upper bounded.
\end{remark}
\stopmodif

Let $\theta=0.1$, applying the technique presented in Section \ref{sec:checking the assumptions} we define $\mu=[1,2]$, $\V_\mu=\{(x_1,x_2):|x_1|<1,|x_2|<2\}$ and $|u|<2\pi$. Moreover, letting $K_1=0.5$ we get that \eqref{eq:feasability:K1} holds. From Assumption \ref{hyp:bounds} and with this choice for $K_1$, we let $M=0.0714$, and $\varepsilon=0.7$.

The matrices $F$ and $G$ defined in \eqref{eq:general system:origin} are given by
$F=\begin{bmatrix}1&1\\ 0&0\end{bmatrix}$ and $G =\begin{bmatrix}0.1\\ 1\end{bmatrix}$
while \startmodif the \stopmodif matrices $\{C_l\}_{l=1}^2$ and $\{D_m\}_{m=1}^2$ \startmodif \stopmodif have elements defined by \eqref{eq:matrix C:elements} and \eqref{eq:matrix D:elements}. The matrices that are not null are given by $C_1 =\begin{bmatrix} 0.3&0\\0&0\end{bmatrix},
C_2 =\begin{bmatrix}-0.3&0\\0&0\end{bmatrix},
D_1 =\begin{bmatrix} 0.1\\ 1\end{bmatrix},\text{\ and\ }
D_2 =\begin{bmatrix}-0.3\\1\end{bmatrix}$. Applying Proposition \ref{prop:local feedback} and using SeDuMi 1.3, we get $P =\begin{bmatrix}16.1210&7.8345\\7.8345&4.9138\end{bmatrix}$ and $K =\begin{bmatrix}-11.2361&-6.6087\end{bmatrix}$.

Figure \ref{fig:coA:0.1:b} shows some solutions of system \eqref{eq:example:1} in closed loop with the feedback law $\varphi_\ell$ \on, the inclusions $\A\subset\overline{\Omega_1(V_1^\ell)}$ and $\overline{\Omega_1(V_1^\ell)}\subset\V_\mu$. From Proposition \ref{prop:local feedback}, Assumptions \ref{hyp:local stability} and \ref{hyp:inclusion} hold with $c_\ell=1$.

\subsection{Construction of the hybrid feedback law}

Since Assumption \ref{hyp:bounds} holds, we get from the proof of Proposition \ref{prop:global backstepping} that the feedback law given by $\varphi_g(x_1, x_2)=-(1+K_1+2\theta x_1)(x_1+\theta x_1^2+x_2)-x_1/(2K_V)-(x_1-\psi_1(x_1))[c+c\Delta(x_1,x_2)^2/4]/K_V$, where $\Delta(x_1,x_2)= |x_1|\theta(1+|x_1|)+K_V\theta(1+|x_1|)(1+|1+K_1+2\theta x_1|)$, $a=0.01$, $c=10$ and $K_V=1.6286\times10^{3}$, the set $\A+a\mathbf{B}_1$ contains a set that is globally asymptotically stable for $\dx=f_h(x,\varphi_g(x))$. Now applying Theorem \ref{thm:hybrid feedback}, we may design a hybrid feedback law $\mathds{K}$. Let $\tilde{c}_\ell=0.75$ the hybrid controller $\mathds{K}$ defined by \eqref{def:C}-\eqref{def:phi1} in Theorem \ref{thm:hybrid feedback} is such that the origin is globally asymptotically stable for \eqref{eq:example:1} in closed loop.

Consider a simulation with initial condition $(x_1,x_2,q)=(2,0,1)$. 
\begin{figure}[!t]
\centering
\includegraphics[width=7cm]{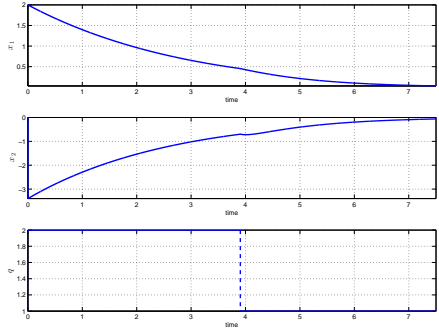}
\caption{Time evolution of a solution of \eqref{eq:example:1} in closed loop with  $\mathbb{K}$ starting from $(2,0,1)$.}
\label{Simu}
\end{figure}
\startmodif Figure \ref{Simu} shows the time evolution of the $x_1$, $x_2$ and $q$ components\footnote{\startmodif Regarding $q$, here it is shown only its first component, because the second one does not change.} of the solution of  \eqref{eq:example:1} in closed loop with $\mathds{K}$. Firstly, \eqref{eq:example:1} is in closed loop with $\varphi_g$ (for $t\in[0,3.9]$), then \eqref{eq:example:1} is in closed loop with $\varphi_\ell$, and the solution converges to the origin.\stopmodif

\section{Proofs}\label{sec:proofs}

\subsection{Proof of Proposition \ref{prop:global backstepping}}\label{proof:prop:global backstepping}
\startmodif
In order to prove Proposition \ref{prop:global backstepping}, the  following lemma is required:
\begin{lemma}\label{lem:omega included in A+aB}
	There exist positive constants $a'$ and $K_V$ and a function
	
	\begin{equation}\label{eq:Lyapunov function:V}
		\begin{array}{rcl}
			V:\R^{n-1}\times\R&\to&\R\\
			(x_1,x_2)&\mapsto&V_1(x_1)+K_V(x_2-\varphi_1(x_1))^2
		\end{array}
	\end{equation}
	such that the set $\Omega_{a'}(V)$ satisfies the inclusion
	\begin{equation}\label{inclus}
		\Omega_{a'}(V)\subset\mathbf{A}+a\mathbf{B}_1.
	\end{equation}
\end{lemma}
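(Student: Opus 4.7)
My plan is to unpack the definition of the sublevel set $\Omega_{a'}(V)$ and bound the two nonnegative pieces of $V$ separately, so that the first piece controls how far $x_1$ can travel inside the admissible $\mathbf{A}$-range, while the second piece controls the distance of $x_2$ to $\psi_1(x_1)$. The lemma statement writes $\varphi_1$ but I will read it as the feedback $\psi_1$ of Assumption \ref{hyp:bounds}, since $V_1$ and $\psi_1$ are the only ingredients available to build a candidate Lyapunov function of the claimed form.

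First I would fix an arbitrary $(x_1,x_2)\in\Omega_{a'}(V)$. Since $V_1(x_1)\geq 0$ and $K_V(x_2-\psi_1(x_1))^2\geq 0$, the inequality $V(x_1,x_2)<a'$ immediately yields the two separate bounds
\begin{equation*}
V_1(x_1)<a'\quad\text{and}\quad (x_2-\psi_1(x_1))^2<\frac{a'}{K_V}.
\end{equation*}
Now I would select the two free parameters. Pick $0<a'\leq M$; then $V_1(x_1)<a'\leq M$ guarantees that the projected point $(x_1,\psi_1(x_1))$ lies in the set $\mathbf{A}$ defined by \eqref{eq:general set:A}. Next pick $K_V>a'/a^2$, which ensures $|x_2-\psi_1(x_1)|<\sqrt{a'/K_V}<a$.

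Finally I would assemble the inclusion: the Euclidean distance from $(x_1,x_2)$ to the point $(x_1,\psi_1(x_1))\in\mathbf{A}$ equals $|x_2-\psi_1(x_1)|$, which by the second bound above is strictly less than $a$. Hence $(x_1,x_2)\in\mathbf{A}+a\mathbf{B}_1$, which proves \eqref{inclus}. There is no real obstacle at the level of this lemma alone: the argument is essentially an unpacking of a quadratic sublevel set. The only subtle point is that the two choices have to be made in the right order (first $a'\leq M$ to place the foot of the perpendicular inside $\mathbf{A}$, then $K_V$ large relative to $a'/a^2$ to keep the transverse displacement below $a$), and one should keep in mind that these constants will have to be reconciled later with the Lyapunov-decrease estimates arising in the main proof of Proposition \ref{prop:global backstepping}, so in practice $K_V$ will be taken as the maximum of the value required here and whatever lower bound that analysis imposes.
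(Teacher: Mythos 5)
Your argument is correct as a proof of the lemma exactly as stated, and it takes a genuinely different route from the paper's. You argue directly: split $V<a'$ into its two nonnegative summands, choose $a'\leq M$ so that the ``foot'' $(x_1,\psi_1(x_1))$ lands in $\mathbf{A}$, and choose $K_V>a'/a^2$ so that the transverse displacement $|x_2-\psi_1(x_1)|$, which is exactly the distance from $(x_1,x_2)$ to that foot, is below $a$. (You are also right to read the $\varphi_1$ in the statement as the $\psi_1$ of Assumption \ref{hyp:bounds}.) The paper instead argues by contradiction along a sequence $V_k=V_1+k(x_2-\psi_1(x_1))^2$ with levels $a'_k$, using properness of $V_1$ to extract a convergent subsequence whose limit lies in $\mathbf{A}$; that proof is non-constructive, whereas yours produces explicit constants. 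What the compactness argument buys, however, is the strict inequality $a'>M$: the paper's levels are (up to an evident typo --- $a'_k=1/k$ should be $M+1/k$, as the displayed bounds $V_1(x_{1,k})<M+\tfrac{1}{k}$ and $(x_{2,k}-\psi_1(x_{1,k}))^2<\tfrac{M}{k}+\tfrac{1}{k^2}$ make clear) of the form $M+1/k$, so the resulting $a'$ exceeds $M$. This matters downstream: in the proof of Proposition \ref{prop:global backstepping} the constant $c_g$ contains the term $1/(\varepsilon[\alpha(a')-\alpha(M)])$, which is meaningful only if $a'>M$. Your choice $a'\leq M$ therefore cannot be fed into that later estimate, and upgrading your direct argument to some $a'=M+\delta$ would require showing that $\{x_1:V_1(x_1)<M+\delta\}$ collapses onto $\{x_1:V_1(x_1)\leq M\}$ as $\delta\to0$, which is precisely the properness/compactness step that the paper's contradiction argument packages. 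So: correct and more elementary for the lemma in isolation, but the constant $a'$ you produce is not the one the rest of the paper needs, and it is $a'$ (not only $K_V$) that must be reconciled with the main proof.
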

\begin{proof}
	Consider the sequence of functions $V_k(x_1,x_2)=V_1(x_1)+k(x_1-\varphi_1(x_1))^2$ and of values $a'_k=1/k$. To prove this lemma by contradiction, assume that, $\forall k>0$, inclusion \eqref{inclus} does not hold. In this case, for each $k>0$, there exists a sequence $(x_{1,k},x_{2,k})$ such that $V_k(x_{1,k},x_{2,k})\leq a'_k$ and $(x_{1,k},x_{2,k})\notin\mathbf{A}+a\mathbf{B}_1$.
	Note that, we have
	\begin{equation}\label{eq:proposition 1: V_1<2M x_2-phi_1<2M}
		\left\{\begin{array}{rcccl}
			V_1(x_{1,k})&<&M+\frac{1}{k}&<&2M\\ 
			(x_{2,k}-\varphi_1(x_{1,k}))^2&<&\frac{M}{k}+\frac{1}{k^2}&<&2M.
		\end{array}\right.
	\end{equation}
	The function $V_1$, being proper, yields the sequence $(x_{1,k},x_{2,k})_{k\in\mathbb{N}}$ belongs to a compact subset. Hence, we can  extract a converging subsequence $(x_{1,l},x_{2,l})$ with
$\lim_{l\to\infty}(x_{1,l},x_{2,l})=(x_1^\ast,x_2^\ast)$. We have, with \eqref{eq:proposition 1: V_1<2M x_2-phi_1<2M}, $V_1(x_1^\ast)\leq M$ and $x_2^\ast-\varphi_1(x_1^\ast)=0$. Hence $(x_1^\ast,x_2^\ast)\in\mathbf{A}$. This contradicts the fact that $(x_{1,l},x_{2,l})\notin\mathbf{A}+a\mathbf{B}_1$. Consequently, $\exists a'$ and $K_V$ such that \eqref{inclus} holds.
\end{proof}
	
We are now able to prove Proposition \ref{prop:global backstepping}.
\stopmodif

\begin{proof}
Let $a\in\Rs$ be a constant. We \startmodif will \stopmodif show that there exists a continuous \startmodif controller \stopmodif $\varphi_g$ rendering $\A+a\B_1$ globally asymptotically stable \startmodif for $\dx=f_h(x,\varphi_g(x))$\stopmodif.

\startmodif Define \stopmodif $r_1(x_1,x_2, u)\startmodif:\stopmodif=f_1(x_1,x_2)+h_1(x_1,x_2,u)$. From items a) and b) of Assumption \ref{hyp:bounds} \startmodif we get,\stopmodif\ $\startmodif\forall\stopmodif(x_1,x_2,u)\in\R^{n-1}\times\R\times\R$, 
\begin{equation}\label{eq:dotV1}
\begin{split}
L_{r_1} V_1(x_1,x_2,u) \leq \varepsilon[ \alpha(M)-\alpha(V_1(x_1))]\\
+ L_{r_1} V_1(x_1,x_2,u)- L_{r_1} V_1(x_1&,\psi_1(x_1), u).
\end{split}
\end{equation}

\startmodif Defining, 
$$\forall s\in[0,1],\quad\eta_{x_1,x_2}(s)=sx_2+(1-s)\psi_1(x_1),$$ \startmodif we have \stopmodif 
$$\partial_s r_1(x_1,\eta_{x_1,x_2}(s),u)=\partial_{x_2} r_1 (x_1,\eta_{x_1,x_2}(s),u) (x_2-\psi_1(x_1)).$$ \startmodif This \stopmodif implies \startmodif that
$$r_1(x_1,x_2,u)-r_1(x_1,\psi_1(x_1),u)=(x_2-\psi_1(x_1))\cdot\int_0^1\partial_{x_2} r_1(x_1,\eta_{x_1,x_2}(s),u)\,ds.$$
\stopmodif
Hence, Equation (\ref{eq:dotV1}) becomes 
$$\begin{array}{rcl}
L_{r_1} V_1(x_1,x_2,u) &\leq& \varepsilon[ \alpha(M)-\alpha(V_1(x_1))]\\
&&+ \partial_{x_1} V_1(x_1)(x_2-\psi_1(x_1))\cdot\displaystyle\int_0^1\partial_{x_2} r_1(x_1,\eta_{x_1,x_2}(s),u)\,ds.
\end{array}
$$

Let $\tilde{\psi}$ be the feedback law defined, $\forall(x_1,x_2,\bar{u})\in\R^{n-1}\times\R\times\R$, by 
 
\begin{equation}\label{eq:proof:proposition:psi feedback}
\begin{array}{rcl}
\tilde\psi(x_1, x_2, \bar{u}) = \textstyle\frac{1}{f_2(x_1, x_2)}\left[\frac{\bar{u}}{K_V}+ L_{f_1} \psi_1(x_1, x_2)\right. \\
\left.-\frac{1}{K_V}\partial_{x_1}V_1(x_1) \cdot\displaystyle\int_0^1\partial_{x_2} f_1(x_1,\eta_{x_1,x_2}(s))\,ds\right],\end{array}
\end{equation}
where $K_V$ is given by Lemma \ref{lem:omega included in A+aB}. Letting $u=\tilde\psi(x_1, x_2, \bar{u})$, Equation \eqref{eq:Lyapunov function:V} implies that
$$\begin{array}{rcl}
	L_{f_h}V(x_1,x_2,\tilde\psi(x_1,x_2,\bar u))& \leq &\varepsilon[ \alpha(M)-\alpha(V_1(x_1))]\\
	&&+ (x_2-\psi_1(x_1)) [\bar u + \Upsilon(x_1, x_2,\tilde\psi(x_1,x_2,\bar u))],
\end{array}
$$ with 
$$\begin{array}{rcl}
\Upsilon(x_1, x_2,\tilde\psi(x_1,x_2,\bar u)) &=& \partial_{x_1} V_1(x_1) \cdot\displaystyle\int_0^1\partial_{x_2} h_1(x_1,\eta_{x_1,x_2}(s),\tilde\psi(x_1,x_2,\bar u))\,ds\\
&&+K_V h_2(x_1, x_2, \tilde\psi(x_1,x_2,\bar u))\\
&&- K_V  L_{h_1}\psi_1(x_1, x_2,\tilde\psi(x_1,x_2,\bar u)).
\end{array}
$$
\startmodif From \stopmodif Items b)-d) of Assumption \ref{hyp:bounds}, $\Upsilon$ satisfies $|\Upsilon(x_1, x_2,\tilde\psi(x_1,x_2,\bar u))| \leq \Delta(x_1, x_2)$, \startmodif where\stopmodif  

\begin{equation}\label{eq:Delta}
\begin{split}
\Delta(x_1, x_2) = ||\partial_{ x_1}V_1(x_1)||\displaystyle\int_0^1 \Psi(x_1,\eta_{x_1,x_2}(s))\,ds\\
+K_V\Psi(x_1,x_2)(1+ ||\partial_{ x_1}\psi_1(x_1&)||).
\end{split}
\end{equation}

\startmodif From \stopmodif Cauchy-Schwartz inequality we get, for each positive constant $c$ and $\forall (x_1,x_2,\bar u)\in\R^{n-1}\times \R\times \R$, $$(x_2-\psi_1(x_1))\Upsilon(x_1, x_2,\bar u) \leq \frac{1}{c}+\frac{c}{4} (x_2-\psi_1(x_1))^2\Delta(x_1,x_2)^2.$$ \startmodif Taking\stopmodif

\begin{equation}\label{eq:tildeu}
\bar u=\tilde u := -(x_2-\psi_1(x_1))\left[c+\textstyle\frac{c}{4} \Delta(x_1,x_2)^2\right]
\end{equation}

\noindent it yields, for all $(x_1, x_2)\in\R^{n-1}\times\R$ and $c\geq1$,

\begin{equation}\label{24/10}
\begin{split}
L_{f_h}V(x_1,x_2,\tilde\psi(x_1,x_2)) \leq \varepsilon[ \alpha(M)-\alpha(V_1(x_1))]\\
+\tfrac{1}{c} - c(x_2-\psi_1(x_1&))^2,
\end{split}
\end{equation}
\noindent where, in order to simplify the presentation, we denoted $\tilde\psi(x_1,x_2,\tilde u)$ by $\tilde\psi(x_1,x_2)$. 

Since $V_1$ is proper, the set $$\A_{\geq0} = \left\{\startmodif(x_1,x_2)\in\R^{n-1}\times\R^n\stopmodif:\varepsilon\alpha(V_1(x_1))+(x_2-\psi_1(x_1))^2\leq \varepsilon \alpha(M)+1\right\},$$ compact. \startmodif Let $\zeta = \max_{(x_1,x_2) \in \A_{\geq0}}\{V(x_1,x_2)\}$, for all $c>1$ and $(x_1,x_2)\in\R^n\setminus\overline{\Omega_\zeta(V)}$, we get $L_{f_h}V(x_1,x_2,\psi(x_1,x_2))<0$. In other words, $\overline{\Omega_\zeta(V)}$ is globally asymptotically stable for $\dx=f_h(x,\psi(x))$.\stopmodif

\startmodif Let $K_\alpha>0$ be the Lipschitz constant \stopmodif of $\alpha$ in the compact set $[0,\zeta]$. Hence, 
$$\forall(x_1,x_2)\in\overline{\Omega_\zeta(V)}, \left|\alpha(V_1(x_1))-\alpha(V(x_1,x_2))\right|\leq \frac{K_V K_\alpha}{2} (x_2-\psi_1(x_1))^2.$$

From (\ref{24/10}) \startmodif we get, for all $c>1$ and \stopmodif $(x_1,x_2)\in \overline{\Omega_\zeta(V)}$, $$\begin{array}{rcl}
L_{f_h}V(x_1,x_2,\tilde\psi(x_1,x_2))&\leq&\varepsilon[ \alpha(M)-\alpha(V(x_1,x_2))]+\frac{1}{c}\\
&& - \left(c-\varepsilon\frac{K_V K_\alpha}{2}\right)(x_2-\psi_1(x_1))^2.
\end{array}$$

\startmodif Consider $a'$ given by Lemma \ref{lem:omega included in A+aB} and let \stopmodif $$c_g=\max\left \{\frac{1}{\varepsilon[\alpha(\startmodif a'\stopmodif)-\alpha(M)]}, \varepsilon\frac{K_V K_\alpha}{2},1\right\}$$ it gives, \startmodif for all $c>c_g$ and \stopmodif $(x_1,x_2)\in\overline{\Omega_\zeta(V)}$, $$L_{f_h}V(x_1,x_2,\tilde\psi(x_1,x_2)) \leq \varepsilon\left[ \alpha(\startmodif a'\stopmodif)-\alpha(V(x_1,x_2))\right].$$ Thus, with $c>c_g$, \startmodif $\forall(x_1,x_2)\in\R^n\setminus\overline{\Omega_{a'}(V)}$, we get $L_{f_h}V(x_1,x_2,$ $\tilde\psi(x_1,x_2))<0$.  Therefore, the set $\Omega_{a'}(V)$ is an attractor for \startmodif $\dx=f_h(x,\tilde\psi(x))$\stopmodif. \startmodif Together \stopmodif with (\ref{inclus}), we conclude that $\A+a\B_1$ contains $\Omega_{a'}(V)$ which is globally asymptotically stable with the control $\varphi_g(x)=\tilde\psi(x)$ and $c>c_g$ \startmodif given by \eqref{eq:proof:proposition:psi feedback} and \eqref{eq:tildeu}, that is $$\begin{array}{rcl}
\varphi_g(x_1,x_2)&=&\textstyle\frac{1}{K_V f_2(x_1,x_2)}[K_VL_{f_1}\psi_1(x_1,x_2)-\\
&&\partial_{x_1}V_1(x_1)\cdot\displaystyle\int_0^1\partial_{x_2}f_1(x_1,\eta_{x_1,x_2}(s))\,ds\\
&&-(x_2-\psi_1(x_1))\cdot(c+\textstyle\frac{c}{4}\Delta^2(x_1,x_2))].
\end{array}
$$\stopmodif This concludes the proof of Proposition \ref{prop:global backstepping}.\end{proof}

\subsection{Proof of Theorem \ref{thm:hybrid feedback}}\label{sec:proof:thm 1}

\begin{proof} 
\on Under Assumption \ref{hyp:bounds}, Proposition \ref{prop:global backstepping} can be applied. This allows one to consider a constant $a$ and \off also choose two constant values\footnote{\startmodif Such values exist since Assumptions \ref{hyp:bounds} and \ref{hyp:inclusion} hold, and since $V_{\hat{q}}^\ell$ is a proper function.
\stopmodif} $0<\tilde{c}_\ell<c_\ell$ such that, \startmodif for each $\hat{q}\in\hat{Q}$,
\startmodif
\begin{equation}
\label{choiceof tilde c}
\max _{x\in \A+a\B_1} V_{\hat{q}}^\ell (x)< \tilde{c}_\ell\ .
\end{equation}
\stopmodif
Let us consider the controller $\varphi_g$ given by the proof of Proposition \ref{prop:global backstepping} and \startmodif use it to \stopmodif design a hybrid feedback law $\mathds{K}$ building an hysteresis of \startmodif local controllers $\varphi_{\hat{q}}^\ell$\stopmodif\ and $\varphi_g$ on appropriate domains (see also \cite[Page 51]{4806347} or \cite{Prieur2001} for similar concepts applied to different control problems). Define $Q=\{1,2\}\startmodif\times\hat{Q}\stopmodif$. Consider the subsets \eqref{def:C} and the functions defined in \eqref{def:phi1}.\startmodif\ The state of system \eqref{eq:general system} in closed loop with $\mathds{K}$ is\stopmodif\ $(x,q)\in\R^n\times Q$. 

\startmodif
\textsc{Case 1.} Assume that $q=(2,\hat{q})$. 
	
			i.\ If $x\in\mathscr{C}_{2,\hat{q}}$. Then from \eqref{def:phi1}, we have $\varphi_{2,\hat{q}}(x)=\varphi_{g}(x)$. From Assumptions \ref{hyp:bounds} and Proposition \ref{prop:global backstepping}, $\A$ is globally practically asymptotically stable for $\dx=f_{h}(x,\varphi_{g}(x))$ and $\mathbf{A}\subset\mathscr{D}_{2,\hat{q}}$. Moreover, the solution will not jump until the $x$ component enters in $\mathscr{D}_{2,\hat{q}}$;
			
			ii.\ If $x\in\mathscr{D}_{2,\hat{q}}$. Then from \eqref{def:phi1}, we have $g_{2,\hat{q}}(x)=\{(1,\hat{q})\}$ and, after the jump, the local hybrid controller is selected. Since the value of $x$ does not change during a jump and $\mathscr{D}_{2,\hat{q}}\subset\Omega_{c_\ell}(V_{\hat{q}}^\ell)$, it follows from Assumption \ref{hyp:local stability}, that the origin is locally asymptotically stable for \eqref{eq:local hybrid system}.
	
\textsc{Case 2.} Assume that $q=(1,\hat{q})$.

			i.\ If $x\in\mathscr{D}_{1,\hat{q}}$. Then from \eqref{def:C} and \eqref{def:phi1}, we have either
				
				\underline{i.a.}\ $q^+=(2,\hat{q})$ and, after the jump, the global controller $\varphi_g$ is selected. Since before this jump, the $x$ component must be inside $\overline{\R^n\setminus\Omega_{{c}_\ell}(V_{\hat{q}}^\ell)}$, $\overline{\R^n\setminus\Omega_{{c}_\ell}(V_{\hat{q}}^\ell)}\subset\mathscr{C}_{2,\hat{q}}$ and the $x$ component does change after the jump, the solution follows the behavior prescribed by Case 1.i., after the jump; or \underline{i.b.}\ $q^+=(1,g_{\hat{q}}^\ell(x))$ and, after the jump, a local controller is selected. Since before this jump, the $x$ component must be inside $x\in\overline{\Omega_{c_\ell}(V_{\hat{q}}^\ell)}\cap\mathscr{D}_{\hat{q}}^\ell$ and the $x$ component does change after the jump, it follows from Assumption \ref{hyp:local stability} that the solution  converges to the origin;

			ii.\ If $x\in\mathscr{C}_{1,\hat{q}}$. Then from \eqref{def:phi1}, we have $\varphi_{1,\hat{q}}(x)=\varphi_{\hat{q}}^{\ell}(x)$. From Assumption \ref{hyp:local stability}, the origin is locally asymptotically stable for \eqref{eq:local hybrid system}. Thus, the origin is locally stable and globally attractive. This concludes the proof.
 \end{proof}

\subsection{Proof of Proposition \ref{prop:local feedback}}\label{sec:proof:local feedback}

\begin{proof}
Equation \eqref{eq:LMI:a} may be rewritten in terms of a Linear Matrix Inequality in the matrix variables \eqref{eq:hybrid system} and $W$ given by $W(F+C_l)^T+H(G+D_m)^T+(F+C_l)W+(G+D_m)H^T<0.$
Multiplying this equation at left by and right by a symmetric positive definite matrix $P$ we get, $\forall l\in\L$, $\forall m\in\Mi$, and $\forall x\in\V_\mu$, $x\neq0$,  $x^T(F+C_l+(G+D_m)K)^TPx+x^TP(F+C_l+(G+D_m)K)x<0$.

Equation \eqref{eq:condition:derivee de Lyap} is equivalent to $\mu_s^2W-We_se_s^TW^T>0$, $\forall s=1,2,\ldots,n.$ For each $s=1,2,\ldots,n$, this inequality implies that $x^Te_se_s^Tx<\mu_s^2x^TPx\leq\mu_s^2, \forall x\in\overline{\Omega_1(V_{\hat{q}}^\ell)}$. Since $e_s\cdot x=x_s$, we get $x_s^2<\mu_s^2$, $\forall x\in \overline{\Omega_1(V_{\hat{q}}^\ell)}$, $s=1,2,\ldots,n$.
In other words, $\overline{\Omega_1(V_{\hat{q}}^\ell)}\subset \V_\mu$. Equation \eqref{eq:condition:coA} implies that $x_p^TPx_p\leq1$, $\forall p\in \P$ and thus $\co(\{x_p\}_{p\in\P})\subset \overline{\Omega_1(x^TPx)}$. By Schur complement, \eqref{eq:condition:K} is equivalent to $\mu_u^2W-HH^T\geq0$. This implies $\mu_u^2W^{-T}\geq K^TK$. Then, $\forall x\in\overline{\Omega_1(V_{\hat{q}}^\ell)}$, $x^TK^TKx\leq\mu_u^2x^TPx\leq\mu_u^2$. It concludes the proof.
\end{proof}

\section{Conclusion}\label{sec:conclusion}
A design \on of hybrid feedback laws \off method has been \on presented \off in this paper to combine a backstepping controller with a local feedback law. \on This allows \off us to define a stabilizing control law for nonlinear control systems for which \on the backstepping design procedure \off can not be applied \startmodif to globally stabilize the origin\stopmodif. We have also developed \startmodif conditions to check the assumptions needed for the presented results\stopmodif. In a future work, the authors intend to use these techniques for other classes of nonlinear systems (e.g., cascade systems or in forwarding form).

\bibliographystyle{IEEEtran}
\bibliography{bibliography}

\end{document}